\title{On the action of the long cycle on the Kazhdan-Lusztig basis}
\author[Gossow \& Yacobi]{Fern Gossow\thanks{\href{mailto:M.Gossow@maths.usyd.edu.au}{M.Gossow@maths.usyd.edu.au}}\addressmark{1}, \and Oded Yacobi\thanks{\href{mailto:oded.yacobi@sydney.edu.au}{oded.yacobi@sydney.edu.au} Oded Yacobi was partially supported by the ARC grant DP180102563}\addressmark{1}}
\address{\addressmark{1}School of Mathematics and Statistics, University of Sydney, Australia}
\abstract{The complex irreducible representations of the symmetric group carry an important canonical basis called the Kazhdan-Lusztig basis.  Although it is difficult to express how general permutations act on this basis, some distinguished permutations have beautiful descriptions.  In 2010 Rhoades showed that the long cycle $(1,2,...,n)$ acts by the jeu-de-taquin promotion operator in the case when the irreducible representation is indexed by a rectangular partition.  We prove a generalisation of this theorem in two directions: on the one hand we lift the restriction on the shape of the partition, and on the other hand we enlarge the result to the collection of all separable permutations.}
\keywords{Specht modules, Kazhdan-Lusztig basis, promotion, evacuation}
\numberwithin{equation}{section}
\newtheorem{Theorem}[equation]{Theorem}
\newtheorem{Proposition}[equation]{Proposition}
\newtheorem{Lemma}[equation]{Lemma}
\newtheorem{Corollary}[equation]{Corollary}
\theoremstyle{definition}
\newtheorem{eg}[equation]{Example}
\newcommand{\bC}{\mathbf{C}}
\newcommand{\bZ}{\mathbf{Z}}
\renewcommand{\phi}{\varphi}
\renewcommand{\tilde}[1]{\widetilde{#1}}
\newcommand\iso\cong
\newcommand\into\hookrightarrow
\newcommand\onto\twoheadrightarrow
\newcommand{\nc}{\newcommand}
\nc{\la}{\lambda}
\nc{\Iso}{\mathsf{Iso}}
\nc{\Irr}{\mathsf{Irr}}
\nc{\Id}{\mathrm{Id}}
\DeclareMathOperator\idx{idx}
\def\Cell{\mathcal{C}}
\def\muu{\overline{\mu}}
\def\pr{\mathrm{pr}}
\def\ev{\mathrm{ev}}
\def\rsk{\rightsquigarrow}
\begin{document}

\maketitle
%% note that you DO NOT have to put your abstract here -- it is generated by \maketitle and the \abstract and \resume commands above

\section{Main Results}

The complex irreducible representations, aka Specht modules, of the symmetric group $S_n$ are indexed by partitions of $n$:  for such a partition $\lambda\vdash n$ we denote by $S^\lambda$ the associated representation.  These carry a canonical basis called the Kazhdan-Lusztig basis $\{C_T  \mid T \in SYT(\lambda)\}$, which is indexed by the set  of standard Young tableaux of shape $\lambda$.  

Although the Kazhdan-Lusztig basis enjoys many wonderful properties, it is quite complicated to express its transformation under the action of an arbitrary permutation.  Nevertheless, owing to a close connection with the RSK correspondence, it is possible for certain distinguished elements.  

In the mid 1990s Berenstein-Zelevinsky \cite{berenstein} and Stembridge \cite{stembridge} showed independently that the long element of $S_n$ acts on the Kazhdan-Lusztig basis (up to sign) by  Sch\"utzenberger's evacuation operator.  More recently, Rhoades proved in the case of \textit{rectangular} partitions that the long cycle $c=(1,2,\hdots,n)$ acts on the Kazhdan-Lusztig basis (up to sign) by the jeu-de-taquin promotion operator, $\pr$ \cite{rhoades}.  This was the essential new ingredient needed in Rhoades' cyclic sieving phenomenon regarding the action of promotion on rectangular standard Young tableaux.  

Our first main result is a generalisation of Rhoades' Theorem to arbitrary partitions.  Fix a partition $\lambda \vdash n$ and number its removable boxes $1,2,\dots,r$ moving downward.  Define the \emph{index} of $T \in SYT(\lambda)$ to be the said number of the removable box containing $n$, and denote it $\idx(T)$. Choose an ordering of the Kazhdan-Lusztig basis which is weakly increasing along the index.  For $w\in S_n$ let $[w]$ be the matrix of $w:S^\lambda \to S^\lambda$ in this ordered basis.  

\begin{Theorem}\label{thm:main1}
Let $\lambda \vdash n$ be an arbitrary partition.  
Let $[c]=QR$ be the QR decomposition of $[c]$ into the product of an orthogonal matrix $Q$ and an upper triangular matrix $R$.  Then $Q$ is a generalised permutation matrix for $\pr$ with entries $\pm 1$, i.e. for any $T \in SYT(\lambda)$ we have 
$$
QC_T=\pm C_{\pr(T)}.
$$
The sign appearing above depends only on the index of $T$.
\end{Theorem}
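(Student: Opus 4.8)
The plan is to exploit the factorisation of the long cycle as a product of two longest elements. Writing $w_0$ for the longest element of $S_n$ and $w_0'$ for the longest element of the parabolic subgroup $S_{n-1}$ (permuting $1,\dots,n-1$ and fixing $n$), a direct computation in one-line notation gives $c = w_0\, w_0'$. Since the matrix of a group element factors as $[c]=[w_0][w_0']$, and the theorem of Berenstein--Zelevinsky and Stembridge tells us that $[w_0]$ is a signed permutation matrix realising evacuation, $w_0 C_T = \pm C_{\ev(T)}$, the entire problem reduces to understanding the single remaining factor $[w_0']$ in the index-ordered basis.

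The key structural claim about $[w_0']$ is the following. Let $V_{\le i}$ be the span of those $C_T$ with $\idx(T)\le i$. I would first show that each $V_{\le i}$ is an $S_{n-1}$-submodule of $S^\lambda$, so that the index filtration realises the branching rule: the subquotient $\gr_i = V_{\le i}/V_{\le i-1}$ is isomorphic to $S^{\mu_i}$, where $\mu_i$ is $\lambda$ with its $i$-th removable box deleted, and the induced basis on $\gr_i$ is, up to sign, the Kazhdan--Lusztig basis of $S^{\mu_i}$. The downward numbering of removable boxes is arranged precisely so that this filtration is increasing in the chosen order, making $[w_0']$ block upper triangular with respect to the index blocks. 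The Berenstein--Zelevinsky--Stembridge theorem applied inside $S_{n-1}$ then identifies its $i$-th diagonal block with the orthogonal signed permutation matrix of evacuation on $SYT(\mu_i)$; that is, on $\gr_i$ the element $w_0'$ acts by $\pm\ev$, and these diagonal blocks assemble into the signed permutation matrix of the index-preserving operator $\ev'$ on $SYT(\lambda)$ that deletes $n$, evacuates the remaining tableau, and reinserts $n$.

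The linear-algebra heart of the argument is then a short lemma: a block upper triangular matrix whose diagonal blocks are orthogonal has a QR decomposition in which $Q$ is exactly the block-diagonal matrix of those diagonal blocks. Indeed, if $Q'$ denotes that block-diagonal signed permutation matrix, then $(Q')^{T}[w_0']$ has identity diagonal blocks and vanishing blocks below the diagonal, hence is upper triangular with positive diagonal; by uniqueness of QR this exhibits $[w_0']=Q'R'$. Combining with $[w_0]=P$ orthogonal gives $[c]=P Q' R'=(PQ')R'$, and since $PQ'$ is a product of signed permutation matrices, hence orthogonal, while $R'$ is upper triangular with positive diagonal, uniqueness of QR forces $Q=PQ'$ and $R=R'$. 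Finally $PQ'$ realises the composite $\ev\circ\ev'$, which by Sch\"utzenberger's classical identity equals promotion, so $QC_T=\pm C_{\pr(T)}$; tracking the two evacuation signs, each of which depends only on the shape of the relevant tableau, shows the total sign depends only on $\idx(T)$.

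The main obstacle I anticipate is the structural claim of the second paragraph --- that the index filtration is by $S_{n-1}$-submodules and that the Kazhdan--Lusztig basis descends to the Kazhdan--Lusztig basis on each branching summand. This is the compatibility of the cellular Kazhdan--Lusztig structure with restriction to a maximal parabolic, and it is precisely what makes the diagonal blocks honest evacuation matrices; once it is in place, the QR lemma and the combinatorial identity $\pr=\ev\circ\ev'$ are comparatively routine.
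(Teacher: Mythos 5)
Your proposal is correct and follows essentially the same route as the paper: the factorisation $c=w_nw_{n-1}$, the index filtration realising the branching rule with the Kazhdan--Lusztig basis descending to the Kazhdan--Lusztig basis on each subquotient (the paper's Theorem \ref{thm:branching}, proved via Rhoades' $\muu$-preservation lemma and explicit RSK computations), the Berenstein--Zelevinsky--Stembridge evacuation theorem applied on each block, and the identity $\pr=\ev_n\ev_{n-1}$. The explicit QR uniqueness lemma you include is just a repackaging of the paper's ``leading term'' formulation, which the authors note is equivalent.
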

Equivalently, this theorem states that $c$ acts on the Kazhdan-Lusztig basis by promotion on the leading term, i.e.
$$
c\cdot C_T = \pm C_{\pr(T)}+ \sum_R a_R C_{\pr(R)},
$$
where the sum is over $R$ with index strictly smaller than the index of $T$, and $a_R \in \bZ$.  In the case of a rectangular partition, the sum is empty and we recover Rhoades' Theorem.  

The key property of rectangular partitions that Rhoades leverages is that the restriction of the corresponding Specht module to $S_{n-1}$ remains irreducible, and the Kazhdan-Lusztig basis is unchanged if the space is regarded as an $S_{n-1}$-module or an $S_n$-module.  It is therefore not surprising that the key step in our argument involves the interaction between the restriction functor and the Kazhdan-Lusztig basis in a more general setting, which may be interesting in its own right.

For $k\geq 0$ define the subspace $S^\lambda_k =\mathrm{span}\{C_T \mid T\in SYT(\lambda) \text{ and } \idx(T)\leq k\}$ of $S^\lambda$. This gives rise to a filtration $0=S^\lambda_0\subset S^\lambda_1 \subset S^\lambda_2 \subset \cdots \subset S^\lambda_r=S^\lambda$, where $r$ is the number of removable boxes of $\lambda$.

\begin{Theorem}\label{thm:branching}Each subspace $S_i^\lambda$ is $S_{n-1}$-invariant, and for every $1 \leq i \leq r$, there is an isomorphism of $S_{n-1}$-modules $S^\lambda_i/S^\lambda_{i-1} \cong S^{\mu_i}$, where $\mu_i \vdash n-1$ is the partition obtained from $\lambda$ by deleting the $i^{\text{th}}$ removable box.  
\end{Theorem}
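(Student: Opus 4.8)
The plan is to identify the filtration $S^\lambda_\bullet$ with the spectral filtration of the Jucys--Murphy element $X_n=\sum_{k=1}^{n-1}(k,n)\in\bZ[S_n]$, exploiting that $X_n$ commutes with $S_{n-1}$. Number the removable boxes $1,\dots,r$ downward as in the statement and let $c_i$ be the content of the $i$-th box. Since the content strictly decreases as one moves down through the removable boxes, we have $c_1>c_2>\cdots>c_r$; moreover $X_n$ is diagonalisable, and on Young's seminormal basis it acts on the vector labelled by $S\in SYT(\lambda)$ by the content of the box of $S$ containing $n$, namely by $c_{\idx(S)}$.

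The crux is the following compatibility of $X_n$ with the Kazhdan--Lusztig basis: ordering the basis by increasing index, $X_n$ is upper triangular with the correct eigenvalues on the diagonal, i.e.
\[
X_n\, C_T \;=\; c_{\idx(T)}\,C_T \;+\!\!\sum_{\idx(T')<\idx(T)}\!\! a_{T',T}\,C_{T'},\qquad a_{T',T}\in\bZ;
\]
denote this identity by $(\ast)$. Granting $(\ast)$, the theorem follows quickly. As the diagonal values $c_1,\dots,c_r$ are \emph{distinct}, the ascending index flag coincides with the ascending eigenspace flag of $X_n$; explicitly $S^\lambda_k=\bigoplus_{i\le k}E_{c_i}$, where $E_{c_i}$ is the $c_i$-eigenspace. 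Each $E_{c_i}$ is $S_{n-1}$-stable because $S_{n-1}$ commutes with $X_n$ and hence preserves its eigenspaces, so every $S^\lambda_k$ is $S_{n-1}$-invariant. Finally $S^\lambda_i/S^\lambda_{i-1}\cong E_{c_i}$ as $S_{n-1}$-modules, and by the multiplicity-free branching rule together with the standard fact that $X_n$ acts on the $S^{\mu}$-summand of $\res^{S_n}_{S_{n-1}}S^\lambda$ by the content of the box $\lambda/\mu$, the eigenspace $E_{c_i}$ is exactly the summand $S^{\mu_i}$. This yields the asserted isomorphism $S^\lambda_i/S^\lambda_{i-1}\cong S^{\mu_i}$.

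It remains to prove $(\ast)$, and this is where the real work lies. One clean input is available immediately from the $W$-graph description of the action: writing $\omega(T,T')$ for the Kazhdan--Lusztig edge weights, each simple reflection acts by $s_jC_T=-C_T$ when $j$ is a descent of $T$ and by $s_jC_T=C_T+\sum_{T'}\omega(T,T')C_{T'}$ (summed over $T'$ having $j$ as a descent) otherwise. For $j\le n-2$ the descent condition at $j$ depends only on the relative positions of $j$ and $j+1$, hence only on the tableau $T\setminus\{n\}\in SYT(\mu_{\idx(T)})$ obtained by deleting $n$; thus deletion of $n$ preserves descent sets within each index class and governs the diagonal part of the $S_{n-1}$-action. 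The hard part is to control how the box containing $n$ interacts with the off-diagonal structure. I would attack $(\ast)$ by showing it is equivalent to the statement that the Kazhdan--Lusztig basis and the seminormal basis induce the \emph{same} filtration by index, i.e. that each $C_T$ lies in the span of those seminormal vectors $v_S$ with $\idx(S)\le\idx(T)$. A closely related and sufficient statement, which already gives the $S_{n-1}$-invariance of the flag, is that the edge weights $\omega(T,T')$ relevant to a reflection $s_j$ with $j\le n-2$ never raise the index. Establishing this triangularity of the Kazhdan--Lusztig structure against the position of $n$ -- whether by a direct analysis of how elementary moves on tableaux displace the box containing $n$, or by a comparison of the two bases -- is precisely the interaction between restriction and the Kazhdan--Lusztig basis advertised above, and will be the main obstacle.
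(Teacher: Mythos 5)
Your reduction of the theorem to the claim $(\ast)$ is sound, and the deduction of the theorem from $(\ast)$ (distinct contents $c_1>\cdots>c_r$ of the removable boxes, diagonalisability of $X_n$, $S_{n-1}$-stability of its eigenspaces) is correct. But $(\ast)$ is never proved: you state explicitly that it is ``where the real work lies'' and only sketch two possible lines of attack. Since $X_n=\kappa_n-\kappa_{n-1}$ with $\kappa_n$ central in $\bZ S_n$ and $\kappa_{n-1}\in\bZ S_{n-1}$, the claim $(\ast)$ is in fact essentially \emph{equivalent} to the theorem (given standard Jucys--Murphy theory), so the proposal as written defers the entire content of the statement to an unproven assertion. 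This is a genuine gap, not a routine verification: the interaction between the position of $n$ in a tableau and the Kazhdan--Lusztig edge weights $\muu(T,R)$ is exactly the nontrivial input, and nothing in the proposal establishes it.

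For comparison, the weaker sufficient statement you identify --- that for $j\le n-2$ the edges $\muu(T,R)$ appearing in $s_j\cdot C_T$ never raise the index --- is precisely what the paper's first lemma in Section~3 proves. It does so by realising $S^\lambda$ as the cell module attached to the column super-strict recording tableau $CSS(\lambda)$, so that $\idx(T)$ is visible as the position of $n$ in the one-line notation of the corresponding permutation, and then ruling out index-raising edges by a short case analysis in the Bruhat order. The identification of the $i$-th subquotient with $S^{\mu_i}$ is then obtained not from eigenvalue bookkeeping but from Proposition~\ref{prop::d_mu}, which shows via Rhoades' insertion lemma that deleting the box containing $n$ preserves $\muu$; this yields the module isomorphism directly and, unlike your route, does not presuppose the multiplicity-free branching rule (which the paper instead recovers as a consequence). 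If you want to complete your argument, you would need to supply a proof of the ``never raise the index'' claim --- at which point you would essentially be reproducing the paper's lemma --- and then either prove the full $(\ast)$ or replace the $X_n$-eigenvalue identification with an argument that does not assume the branching rule you are trying to locate inside the KL basis.
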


Note that this theorem provides a new proof that the branching law for symmetric groups is multiplicity-free, as
$$\mathrm{res}^{S_n}_{S_{n-1}}S^\lambda\cong\bigoplus_{i=1}^r S_i^\lambda/S_{i-1}^\lambda\cong\bigoplus_{i=1}^r S^{\mu_i}.$$

Finally, we mention a generalisation of Theorem \ref{thm:main1} to the class of all \emph{separable permutations}, which includes the long element and the long cycle. By definition, separable permutations are those that can be obtained from $e \in S_1$ by repeated applications of direct sum and skew sum.  Alternatively, these are the permutations that avoid the patterns $2413$ and $3142$ \cite{separable}.

Fix $\lambda\vdash n$ an arbitrary partition. In Section \ref{sect:other-actions}, we describe briefly how to associate to each separable permutation $w\in S_n$ a bijection $\varphi$ of $SYT(\lambda)$ such that an analogous result to Theorem \ref{thm:main1} holds. That is, if $[w]:S^\lambda\to S^\lambda$ is the matrix giving the action of $w$ on some given ordering of the KL basis, we have:

\begin{Theorem}\label{thm:generalmain}
Let $w\in S_n$ be any separable permutation, and let $[w]=QR$ be the QR decomposition. Then $Q$ is a (generalised) permutation matrix of $\varphi$.
\end{Theorem}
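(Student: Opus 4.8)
The plan is to reformulate the conclusion geometrically and then induct on the recursive structure of $w$ as a separable permutation. Fix the index-ordered coordinate flag $F_1 \subset F_2 \subset \cdots \subset F_N = S^\lambda$, where $F_j = \mathrm{span}\{C_{T_1}, \ldots, C_{T_j}\}$ and $T_1, T_2, \ldots$ is the chosen ordering of $SYT(\lambda)$, and write $[w]=QR$ for the QR decomposition taken with respect to the inner product making the KL basis orthonormal. Since $R$ is upper triangular we have $w(F_j) = Q(F_j)$, and a Gram--Schmidt argument then shows that $Q$ is a generalised permutation matrix \emph{if and only if} $w(F_j)$ is a coordinate subspace --- one spanned by a subset of the KL basis --- for every $j$; in that case $\varphi$ is recovered by recording the single basis vector $C_{\varphi(T_j)}$ added in passing from $w(F_{j-1})$ to $w(F_j)$, the entries of $Q$ are forced to be $\pm 1$, and orthogonality of $w(F_j)$ to $w(F_{j-1})$ pins down the sign. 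It therefore suffices to produce, for each separable $w$, a bijection $\varphi$ satisfying this coordinate-subspace condition. I would define $\varphi$ by recursion on a separating tree of $w$ --- the expression built from the point $e \in S_1$ by direct sums $u \oplus v$ and skew sums $u \ominus v$ --- and verify the condition by the same recursion, with the identity (acting trivially, $\varphi = \mathrm{id}$) as base case.

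For the direct sum $w = u \oplus v$, with $u \in S_a$, $v \in S_b$ and $a+b=n$, I would factor $w = (u \oplus 1_b)(1_a \oplus v)$ into commuting permutations supported on the initial block $\{1,\ldots,a\}$ and the final block $\{a+1,\ldots,n\}$, and analyse each through branching. The factor $u \oplus 1_b$ lies in $S_{\{1,\ldots,a\}}$, and iterating Theorem~\ref{thm:branching} along $S_n \supset S_{n-1} \supset \cdots \supset S_a$ refines the index filtration to an $S_a$-stable filtration whose graded pieces are Specht modules $S^\alpha$, for shapes $\alpha \vdash a$ obtained by deleting the boxes containing $a+1,\ldots,n$; on each piece $u$ acts as it does on $S^\alpha$, and by the inductive hypothesis satisfies the coordinate-subspace condition there with bijection $\varphi_u$. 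The complementary factor $1_a \oplus v \in S_{\{a+1,\ldots,n\}}$ is handled by the dual branching obtained by instead deleting the boxes containing $1,\ldots,a$ (equivalently, by running the downward analysis after conjugating by the longest element). Assembling $\varphi_u$ and $\varphi_v$ across the graded pieces defines $\varphi = \varphi_{u\oplus v}$.

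The skew sum $w = u \ominus v$ I would reduce to the direct-sum case using the long element. As skew sum is direct sum turned upside down, $u \ominus v$ differs from an associated direct sum by left and right multiplication with the longest elements $w_0$ of the relevant symmetric groups, which by the theorem of Berenstein--Zelevinsky and Stembridge act on the KL basis (up to sign) by Sch\"utzenberger evacuation $\ev$ --- that is, by honest signed permutation matrices. Conjugating the coordinate-subspace condition already established for the direct-sum piece by these operators transports it to $w$ and realises $\varphi_{u\ominus v}$ as the corresponding composition of $\varphi$ with evacuations. As a consistency check, the long cycle $c = 1_{n-1} \ominus 1_1$ recovers $\varphi = \pr$ of Theorem~\ref{thm:main1}, while $w_0$ itself recovers $\varphi = \ev$.

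I expect the main obstacle to be bookkeeping the interaction of the two filtrations governing the two operations. Direct sums are controlled by the index filtration built from the box containing $n$ (Theorem~\ref{thm:branching}), whereas skew sums introduce evacuation, which swaps the roles of $n$ and $1$ and does \emph{not} respect the index filtration. The delicate point is to show that the coordinate-subspace condition survives each alternation between $\oplus$ and $\ominus$ --- equivalently, that $w(F_\bullet)$ remains a coordinate flag throughout the recursion --- and that the resulting $\varphi$ is independent of the chosen separating tree. This amounts to verifying that evacuation permutes the graded pieces of the branching filtration compatibly with the index ordering used to build the flag, and to reconciling the downward filtration (from the box containing $n$) with its dual (from the box containing $1$) when both blocks of a direct sum are nontrivial.
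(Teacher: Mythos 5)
Your geometric reformulation of the QR condition is correct, and your treatment of the factor $u\oplus 1_b$ via iterated branching is essentially the paper's own construction of the filtration attached to a connected subset $J$ of simple reflections (whose bijection is indeed built from partial evacuations, as in your skew-sum step). The genuine gap is exactly the point you defer as ``the main obstacle'': the composition. The factors $u\oplus 1_b$ and $1_a\oplus v$ are each triangular-with-permutation-leading-term only with respect to their \emph{own} filtrations --- one built from the positions of $n,n-1,\dots,a+1$, the other from the positions of $1,\dots,a$ --- and neither refines the other. Knowing that $u\oplus 1_b$ carries the index flag $F_\bullet$ to coordinate subspaces does not imply it carries the coordinate subspaces $(1_a\oplus v)(F_j)$, which are spans of essentially arbitrary subsets of the KL basis, to coordinate subspaces: a unipotent upper-triangular matrix preserves the standard flag yet destroys the coordinate line through $e_2$. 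The same problem resurfaces in your skew-sum step, where conjugating by $[w_0]=\pm[\ev_n]$ transports the coordinate-flag property from $F_\bullet$ to the flag $\ev_n(F_\bullet)$ rather than back to $F_\bullet$; so the ordering of the basis cannot be fixed once and for all as the index ordering but must itself be rebuilt at every node of the separating tree (in the paper the preorder $\prec_{J_\bullet}$ depends on $w$, e.g.\ via an $\ev_a\ev_n$ twist of the total index ordering).

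This compatibility is not bookkeeping that ``amounts to verifying'' something; it is the entire content of the theorem beyond Theorems \ref{thm:main1} and \ref{thm:branching}. The paper organises separable permutations as descending products $w_{J_k}\cdots w_{J_1}$ over an \emph{increasing} chain $J_1\subset\cdots\subset J_k$ precisely so that successive filtrations have a chance of being compatible, and even then only claims a combinatorial proof when every $J_i$ is connected; the general case is obtained from categorical representation theory (braid group actions on triangulated categories) deferred to forthcoming work. Your factorisation $(u\oplus 1_b)(1_a\oplus v)$ lands squarely in the hard case: already for $u,v$ both longest elements it produces $w_J$ for the \emph{disconnected} subset $J=\{s_1,\dots,s_{a-1}\}\cup\{s_{a+1},\dots,s_{n-1}\}$, whose two commuting factors exhibit the incompatible-filtration problem in its pure form. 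The long-cycle consistency check works only because there the outer factor $w_n$ is an honest signed permutation matrix, which maps \emph{every} coordinate subspace to a coordinate subspace --- a luxury unavailable for general nodes of the separating tree.
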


In specific cases, we can prove this combinatorially but the general case requires results from categorical representation theory and in particular the action of braid groups on triangulated categories \cite{HLLY}.  In our forthcoming work we will describe this connection, and use this to also derive variations of these theorems for arbitrary simply-laced semisimple Lie algebras.

\section{Background}
In this section we briefly recall some basic results we'll need.  We refer the reader to \cite{sagan} for more details on notions from algebraic combinatorics, and to \cite{bjorner} for the necessary background on Kazhdan-Lusztig theory.

\subsection{Combinatorics}\label{sect:combinatorics}
Let $\lambda \vdash n$ be a partition of $n$, which we depict using its associated Young diagram.
Recall that a  box in $\lambda$ is called a \emph{removable box} if its deletion results in another Young diagram. We label the removable boxes of $\lambda$ by $1,2,\dots,r$ starting from the top, and moving down the tableau.  For example, the partition $(6,6,3,1) \vdash 16$ has three removable boxes, with labelling given by:
$$\text{\small$\young(~~~~~~,~~~~~1,~~2,3)$}$$

Write $SYT(\lambda)$ for the set of standard Young tableaux of shape $\lambda$. We define the \emph{index} of $T \in SYT(\lambda)$, denoted $\idx(T)$, to be the label of the removable box containing $n$ as above. If $R$ and $T$ have the same index, then their largest box occupies the same position. When ordering $SYT(\lambda)$ according to index, we can break ties by comparing the index values of the tableaux after removing their largest boxes. Repeating this procedure gives the \emph{total index ordering} on $SYT(\lambda)$.

Recall that the \emph{descent set} $D(T)$ of $T$ is the set of  $j$ in $\{1,2,\dots,n-1\}$ such that $j+1$ appears strictly below $j$ in $T$. 

The \emph{promotion} map $\pr: SYT(\lambda)\to SYT(\lambda)$ is defined as follows: replace the box with value $n$ by a dot, and move this dot north-west by repeatedly swapping it with the box above or to the left. If both boxes exist, swap with the larger. Once the box has reached the north-west corner, increment all values in the tableau and replace the dot with 1.
$$\young(134,25)\to\young(134,2\bullet)\to\young(1\bullet4,23)\to\young(\bullet14,23)\to\young(125,34)$$

The \emph{evacuation} map $\ev_n: SYT(\lambda)\to SYT(\lambda)$ is defined as follows: denote every box in the tableau `fixed' or `unfixed', with all boxes initially starting unfixed. At each step, perform the inverse of the promotion map on the unfixed boxes, and fix the largest remaining box. Repeat until all boxes are fixed. In the example below fixed boxes are in boldface:
$$\young(134,25)\to\young(123,4{{\bf 5}})\to\young(12{{\bf 4}},3{{\bf 5}})\to\young(1{{\bf 3}}{{\bf 4}},2{{\bf 5}})\to\young(1{{\bf 3}}{{\bf 4}},{{\bf 2}}{{\bf 5}})\to
\young({{\bf 1}}{{\bf 3}}{{\bf 4}},{{\bf 2}}{{\bf 5}})$$

Both promotion and evacuation are bijections on $SYT(\lambda)$, and evacuation is in fact an involution. It will also be useful to define the \emph{partial evacuation} maps. For $1\leq k\leq n$, define the involution $\ev_k: SYT(\lambda)\to SYT(\lambda)$ by fixing all but the smallest $k$ boxes, and performing evacuation on the remaining unfixed tableau.
\begin{Lemma}\label{Cor::pr}For every $T\in SYT(\lambda)$, we have $\pr(T)=\ev_n\ev_{n-1}(T)$.\end{Lemma}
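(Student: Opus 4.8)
The plan is to reduce the identity to a single recursive relation tying together full evacuation, partial evacuation, and inverse promotion, obtained by unwinding the definition of the evacuation algorithm, and then to rearrange it using the fact (recorded above) that each $\ev_k$ is an involution. Throughout I write $\pr^{-1}$ for inverse promotion.

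First I would inspect the very first step of the evacuation algorithm computing $\ev_n(T)$. Since every box of $T$ starts unfixed, this step applies $\pr^{-1}$ to all of $T$ and then fixes the unique box now containing $n$; in a standard Young tableau this box sits at a removable corner $p$ of $\lambda$. As $\pr^{-1}$ merely rearranges the values $1,\dots,n$, after this step the unfixed boxes, namely the entries $1,\dots,n-1$ of $\pr^{-1}(T)$, occupy the straight shape $\lambda\setminus\{p\}$ and carry a genuine standard Young tableau.

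Next I would observe that the remaining steps of the algorithm are exactly the evacuation algorithm run on this size-$(n-1)$ sub-tableau, with the box $p$ held fixed throughout. By the definition of partial evacuation (fix all but the smallest $n-1$ boxes and evacuate the rest), these remaining steps compute $\ev_{n-1}(\pr^{-1}(T))$. Hence $\ev_n=\ev_{n-1}\circ\pr^{-1}$ as maps on $SYT(\lambda)$. Composing on the left with $\ev_{n-1}$ and using $\ev_{n-1}\circ\ev_{n-1}=\Id$ gives $\ev_{n-1}\circ\ev_n=\pr^{-1}$, and taking inverses (each $\ev_k$ being an involution) yields $\ev_n\circ\ev_{n-1}=\pr$, which is the claim.

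The bookkeeping in the first two paragraphs is routine; the point needing the most care is the inductive legitimacy of the recursion, namely that at every stage the currently unfixed boxes genuinely form a straight-shape standard Young tableau, so that ``performing evacuation on the unfixed boxes'' coincides on the nose with partial evacuation of a sub-tableau. Once this is pinned down, the only external input is the involution property of the $\ev_k$, already recorded above, and the argument closes.
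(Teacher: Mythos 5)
The paper states this lemma without proof, so there is no in-text argument to compare against; judged on its own, your proposal is correct. The heart of it, $\ev_n=\ev_{n-1}\circ\pr^{-1}$, is exactly what falls out of peeling off the first step of the evacuation algorithm: with every box unfixed, that step performs $\pr^{-1}$ on all of $T$ and then fixes the removable corner now holding $n$, and the remaining steps are verbatim the algorithm computing $\ev_{n-1}$ on $\pr^{-1}(T)$, since the paper's $\ev_{n-1}$ fixes precisely the box containing $n$ and evacuates the rest. The passage from this to $\ev_{n-1}\ev_n=\pr^{-1}$ and then to $\ev_n\ev_{n-1}=\pr$ is a correct use of the involution property of each $\ev_k$, which the paper records as known; this is the only genuinely nontrivial external input, and your proof rightly isolates it rather than reproving it. The caveat you flag yourself --- that at every stage the unfixed boxes form a straight-shape standard tableau, because each newly fixed box sits at a removable corner of the current unfixed shape --- is needed for the recursion to be legitimate, but it is already implicit in the well-definedness of the paper's definition of evacuation, so taking it as given is fair. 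The identity also checks against the paper's worked examples (e.g. $\ev_5$ and $\pr$ on $\lambda=(3,2)$), so I see no gap.
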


The \emph{RSK correspondence} \cite{knuth} gives a bijection between $w \in S_n$ and ordered pairs $(P,Q)$ of tableaux of some shape $\lambda\vdash n$. We will denote the RSK correspondence by $w\rsk(P,Q)$.

\subsection{Representation theory}

The symmetric  group $S_n$ is generated by the simple transpositions $s_j:=(j,j+1)$ for $1\leq j\leq n-1$, satisfying the braid relations and $s_j^2=1$. Given an element $w\in S_n$, we write $l(w)$ for the \emph{length} of $w$ in these generators. 

Given permutations $v,w\in S_n$, we write $v<w$ in the \emph{Bruhat order} if $w$ can be obtained from $v$ by successively swapping two elements in the permutation's one-line notation, increasing the length of the intermediate permutations at each swap. In particular, $v<w$ implies $l(v)<l(w)$. Denote by $D(w)$ the \emph{(left) descent set} of $w$, which consists of the indices $j$ between $1$ and $n-1$ such that $s_jw<w$.

Let $q$ be an indeterminate. The \emph{Hecke algebra (of type A)} $H_n(q)$ is an algebra over $\bZ[q^{\pm 1/2}]$ generated by $\{T_{s_1},\dots,T_{s_{n-1}}\}$ satisfying the braid relations and the quadratic relation $T_{s_j}^2=q+(q-1)T_{s_j}$ for all $1\leq j\leq n-1$. As a $\bZ[q^{\pm 1/2}]$-module, $H_n(q)$ has a basis $\{T_w\}$ indexed by $w\in S_n$. The quadratic relation shows that each $T_{s_j}$, and hence each $T_w$, is invertible.  The \emph{bar involution} $\overline{(\cdot)}$ of the Hecke algebra is the $\bZ$-linear extension of the maps $q\mapsto q^{-1}$ and $T_w\mapsto T_{w^{-1}}^{-1}$.

\begin{Theorem}[Kazhdan-Lusztig Basis, {\cite[Theorem 1.1]{kl}}] There is a unique basis $\{C_w(q)\}$ of $H_n(q)$ indexed by $S_n$, with elements of the form
$$C_w(q)=q^{-l(w)/2}\sum_{v\in S_n}(-1)^{l(v)-l(w)}q^{-l(v)}P_{v,w}(q)T_v,$$
such that: 
\begin{itemize}
\item  $P_{v,w}(q)$ is an integer polynomial, and is non-zero only if $v\leq w$,
\item $P_{v,v}(q)=1$ for all $v\in S_n$,
\item if $v<w$ then $deg(P_{v,w}(q)) \leq (1/2)(l(w)-l(v)-1)$, and
\item $\overline{C_w(q)}=C_w(q)$ for all $w\in S_n$.
\end{itemize}

\end{Theorem}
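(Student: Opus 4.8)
The plan is to establish existence and uniqueness together, for each fixed $w\in S_n$, by descending induction along the Bruhat order on the elements $v\le w$. The engine throughout is the bar involution, whose interaction with the standard basis $\{T_v\}$ I would first package into the so-called $R$-polynomials. Since $\overline{T_s}=T_s^{-1}=q^{-1}T_s+(q^{-1}-1)$ and the bar involution is a ring homomorphism, an induction on $l(w)$ (choosing a left descent $s$ of $w$, writing $T_w=T_sT_{sw}$ with $l(sw)=l(w)-1$, and applying $\overline{T_w}=T_s^{-1}\,\overline{T_{sw}}$) produces an expansion
$$\overline{T_w}=q^{-l(w)}\sum_{v\le w}\overline{R_{v,w}(q)}\,T_v,$$
where $R_{v,w}\in\ZZ[q]$, $R_{w,w}=1$, and $\deg R_{v,w}=l(w)-l(v)$. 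The same induction gives the recursion and the parity statement for the $R_{v,w}$ that I will need later for the degree bookkeeping and for integrality.

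Next I would convert the four defining conditions into a triangular linear system in the coefficients $P_{v,w}$. Substituting the stated form of $C_w$ and the $R$-expansion above, and comparing coefficients of each $T_v$, the requirement $\overline{C_w}=C_w$ becomes, for each $v<w$, an identity of the schematic shape
$$P_{v,w}-q^{l(w)-l(v)}\,\overline{P_{v,w}}=\sum_{v<z\le w}(\pm)\,R_{v,z}\,q^{(\cdots)}\,P_{z,w},$$
in which the right-hand side involves only the $P_{z,w}$ with $z>v$. I would then run descending induction on $l(v)$, with base case $P_{w,w}=1$. At each step the right-hand side is already known, and the two terms on the left occupy disjoint ranges of degree: the bound $\deg P_{v,w}\le\tfrac12(l(w)-l(v)-1)$ keeps $P_{v,w}$ strictly below degree $\tfrac12(l(w)-l(v))$, while $q^{l(w)-l(v)}\overline{P_{v,w}}$ has all its terms of degree at least $\tfrac12(l(w)-l(v)+1)$. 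Hence the known right-hand side admits \emph{at most one} decomposition into a sufficiently-low-degree polynomial and its shifted bar-conjugate. This single observation yields uniqueness directly, and yields existence upon defining $P_{v,w}$ to be exactly the low-degree part of the right-hand side: bar-invariance of the resulting $C_w$ then holds by construction, and the triangularity $P_{v,w}=0$ unless $v\le w$ is inherited from that of the $R$-polynomials.

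The main obstacle is not uniqueness, which is essentially formal once the degree separation is in place, but verifying that the constructed $P_{v,w}$ genuinely lie in $\ZZ[q]$, obey the sharp degree bound, and solve the equation rather than merely its low-degree truncation. For the last point one checks that the right-hand side is itself anti-invariant under $f\mapsto q^{l(w)-l(v)}\overline{f}$ (a consequence of the ring-homomorphism property of the bar involution together with $\overline{\overline{T_w}}=T_w$), which is precisely what forces its high-degree part to equal $-q^{l(w)-l(v)}\overline{(P_{v,w})}$ and makes the splitting consistent. Integrality is where the parity of the $R$-polynomials must be deployed, to guarantee that the coefficient extraction never leaves a genuine half-integer power of $q$, and propagating the degree bound requires weighing $\deg R_{v,z}$ against the inductively controlled degrees of the $P_{z,w}$.

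As a cross-check on the more delicate existence claims, I would keep in reserve the alternative representation-theoretic route: for a left descent $s$ of $w$, build $C_w$ as the bar-invariant correction of the product $C_sC_{sw}$, reducing the positivity and integrality bookkeeping to the single multiplication rule for $C_s$. This route has the additional benefit of exposing the structure constants $\mu(z,w)$ that govern the lower-order terms, which is useful context for the branching filtration studied in the rest of the paper.
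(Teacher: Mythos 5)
This theorem is quoted in the paper purely as background, with a citation to Kazhdan--Lusztig \cite[Theorem 1.1]{kl}; the paper supplies no proof of its own. Your sketch correctly reconstructs the standard argument from that source (see also \cite[Ch.~5]{bjorner}): the $R$-polynomials extracted from the bar involution on the $T_v$, the triangular system solved by descending induction from $P_{w,w}=1$, uniqueness from the disjoint degree ranges of $P_{v,w}$ and $q^{l(w)-l(v)}\overline{P_{v,w}}$, and the anti-invariance of the right-hand side as the consistency check for existence --- so, up to normalization conventions in the $R$-expansion, it matches the cited proof in all essentials.
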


The $P_{v,w}(q)$ are the well-known \emph{Kazhdan-Lusztig (KL) polynomials}. The degree bound gives us a statistic on pairs of permutations. For $v\leq w$, we write $\muu(v,w)\in\bZ$ for the coefficient of degree $(1/2)(l(w)-l(v)-1)$ in $P_{v,w}(q)$. If $l(w)-l(v)$ is even or $v$ is incomparable with $w$ in the Bruhat order, this coefficient will be zero. We also define $\muu(w,v):=\muu(v,w)$ when $w<v$, so $\muu$ is symmetric.

Under the specialisation $q=1$, we have the relation $T_{s_i}^2=1$, and so $H_n(1)\cong \bZ S_n$. 
Set $C_w=C_w(1)\otimes 1 \in \bZ S_n\otimes_{\bZ}\bC=\bC S_n$, so that $\{C_w\mid w\in S_n\}$ is a basis for $\bC S_n$. Define a binary relation on $S_n$ by setting $v\leq_L w$ if, for some $j$, $C_v$ appears with nonzero coefficient in the expansion of $s_j\cdot C_w$. Taking the transitive closure  gives a preorder on $S_n$, called the \emph{left KL preorder}. By its construction, the span of $\{C_v\mid v\leq_L w\}$ for a fixed $w\in S_n$ will be a submodule of $\bC S_n$. The equivalence classes induced by $\leq_L$ are the \emph{left cells} of $S_n$, and we write $v\sim_L w$ if $v$ and $w$ belong to the same left cell.  

For a left cell $\Cell$ of $S_n$, we write $S^\Cell$ for the linear span of elements $\{C_v\mid v\in\Cell\}$.  
The following defines an irreducible action of $S_n$ on $S^\Cell$ \cite[(6.4)]{bjorner}:
\begin{align}\label{eq:kl_action_v}s_j\cdot C_v=\begin{cases}
-C_v & j\in D(v), \\
C_v+\sum_{w\in\Cell,j\in D(w)}\muu(v,w)C_w&j\not\in D(v).
\end{cases}\end{align}

Left cells in $S_n$ are determined by the $Q$-tableau in the RSK correspondence. That is, $v\sim_L w$ if and only if $v\rsk(P,Q)$ and $w\rsk(P',Q)$ for some $P,P',Q\in SYT(\lambda)$ and $\lambda\vdash n$.
Therefore we can uniquely associate each left cell $\Cell$ of $S_n$ to a tableau $Q$ with $n$ boxes, and write $\Cell_Q$. Appropriately restricting the RSK correspondence gives a bijection between the basis elements $C_v \in \Cell_Q$ and $T \in SYT(\lambda)$, by identifying $C_v$ with the $P$-tableau of $v$. Hence, we can reindex the basis of $S^{\Cell_Q}$ as $\{C_T\mid T\in SYT(\lambda)\}$. We can rewrite the action on the KL basis $\{C_T\}$ in terms of tableaux only:
\begin{enumerate}
\item Let $v\in S_n$ and suppose $v\rsk(P,Q)$. Then $j\in D(v)$ if and only if $j\in D(P)$.
\item For $\lambda\vdash n$ and tableaux $P,T,Q,Q'\in SYT(\lambda)$ we have:
$$\muu((P,Q),(T,Q))=\muu((P,Q'),(T,Q')),$$
where we identify permutations with their images under the RSK correspondence.
\end{enumerate}
From this we obtain that for tableaux $Q,Q'\in SYT(\lambda)$, the respective representations $S^{\Cell_Q}$ and $S^{\Cell_{Q'}}$ are equal (not just isomorphic) up to a reindexing of basis elements.

These results have a number of implications. First, we can define the $\muu$ value between tableaux $P,T\in SYT(\lambda)$ by $\muu(P,T)=\muu((P,Q),(T,Q))$ for any $Q\in SYT(\lambda)$. Furthermore, since $S^{\Cell_Q}$ depends only on the shape of $Q$, we can write $S^\lambda$ for the representation $S^{\Cell_Q}$, where $Q$ is any tableau of shape $\lambda$. Finally, we can write the conditions on the descent set of $v$ in (\ref{eq:kl_action_v}) in terms of the $P$-tableau of $v$ under RSK. We incorporate these results into the following theorem.

\begin{Theorem}[{\cite[Theorem 6.5.3]{bjorner}}]Let $\lambda\vdash n$ be a  partition. The module $S^\lambda$ is irreducible, and has a basis $\{C_T\mid T\in SYT(\lambda)\}$ with the following action:
\begin{align}\label{eq:kl-action}s_j\cdot C_T=\begin{cases}
-C_T & j\in D(T), \\
C_T+\sum_{R\in SYT(\lambda),j\in D(R)}\muu(T,R)C_R&j\not\in D(T).
\end{cases}\end{align}
\end{Theorem}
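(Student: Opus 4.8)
The plan is to derive this statement from the cell-module construction already recorded in equation~(\ref{eq:kl_action_v}), together with the two RSK facts~(1) and~(2) established just above it; the only genuinely representation-theoretic input left to supply will be irreducibility. I would fix a tableau $Q$ of shape $\lambda$ and begin from the left-cell module $S^{\Cell_Q}$ with basis $\{C_v \mid v \in \Cell_Q\}$ and action~(\ref{eq:kl_action_v}). Since every $v\in\Cell_Q$ has the same $Q$-tableau under RSK, the assignment $C_v \mapsto C_T$, where $v\rsk(T,Q)$, is a bijection of this basis onto $\{C_T\mid T\in SYT(\lambda)\}$, and I would use it to reindex throughout.

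With the basis reindexed, the two facts convert~(\ref{eq:kl_action_v}) into~(\ref{eq:kl-action}) almost formally. Fact~(1) rewrites the descent condition $j\in D(v)$ as $j\in D(T)$, turning the diagonal case $-C_v$ into $-C_T$ and the branching condition $j\in D(w)$ into $j\in D(R)$ for $w\rsk(R,Q)$. Fact~(2) shows that $\muu(v,w)$ depends only on the two $P$-tableaux, so that $\muu(T,R):=\muu((T,Q),(R,Q))$ is well defined and rewrites the off-diagonal sum as $\sum_{R,\,j\in D(R)}\muu(T,R)C_R$. Because neither the descent sets nor these $\muu$-coefficients involve $Q$, the matrices of all generators $s_j$ in the reindexed basis coincide for any two tableaux $Q,Q'$ of the same shape; hence $S^{\Cell_Q}=S^{\Cell_{Q'}}$ after reindexing, which legitimises the shape-only notation $S^\lambda$ and the action~(\ref{eq:kl-action}).

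The substantive step, and the one I expect to be the main obstacle, is \emph{irreducibility}. Here I would identify $S^\lambda$ with the Specht module of shape $\lambda$. The cleanest route is a dimension-and-character count: the left cells of shape $\lambda$ are indexed by their $Q$-tableaux, so there are exactly $|SYT(\lambda)|$ of them, each an $|SYT(\lambda)|$-dimensional module, and summing the squares over all $\lambda\vdash n$ recovers $\dim \bC S_n = n!$ via the RSK identity. Matching the character of $S^\lambda$ (read off from~(\ref{eq:kl-action}) through RSK) against the irreducible Specht characters then forces each $S^\lambda$ to be irreducible and isomorphic to the Specht module indexed by $\lambda$. This is precisely the point where one genuinely uses that $W=S_n$ is of type $A$---in other Coxeter types left-cell modules need not be irreducible---so for a self-contained argument I would invoke the Kazhdan-Lusztig cell theory underlying~\cite[Theorem 6.5.3]{bjorner}, while everything preceding it is bookkeeping with RSK and the $\muu$-function.
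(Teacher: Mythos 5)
Your proposal matches the paper's treatment: the paper likewise obtains (\ref{eq:kl-action}) by reindexing the left-cell action (\ref{eq:kl_action_v}) through the RSK correspondence using facts (1) and (2), noting that the descent sets and $\muu$-values depend only on the $P$-tableaux so that $S^{\Cell_Q}$ depends only on the shape, and it defers irreducibility to the cited source \cite[Theorem 6.5.3]{bjorner}. Since the theorem appears in the paper as imported background rather than as something proved from scratch, your reconstruction (RSK bookkeeping plus an appeal to type-A Kazhdan--Lusztig cell theory for irreducibility) is essentially the same route.
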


The irreducible representations of $S_n$ are called  Specht modules.  The above theorem provides a canonical basis for Specht modules, which we refer to as the \emph{KL basis}.

\section{The Branching Rule via the KL Basis}

In this section we prove  Theorem \ref{thm:branching}. Fix $\lambda\vdash n$, and recall the filtration of $S^\lambda$ by the subrepresentations $S_i^\lambda$, each defined as the subspace spanned by $\{C_T\}$ with $\idx(T)\leq i$. We will first show that $S_i^\lambda$ always remains invariant under $S_{n-1}$. 

Let $CSS(\lambda)$ be the \emph{column super-strict} tableau of shape $\lambda$, where the values $1,2,\dots,n$ are placed column-by-column, reading left-to-right. We take $\lambda=(4,3,1)$ as an example:
$$CSS(\lambda)=\young(1468,257,3)$$
Preimages of the RSK correspondence when $Q=CSS(\lambda)$ are easy to describe. We simply write the elements of the $P$-tableau in column order, reading from bottom-to-top.
$$85162734\quad\rsk\quad\left(\,\young(1234,567,8),\,\young(1468,257,3)\,\right)$$

\begin{Lemma}Fix $\lambda\vdash n$ and $1\leq i\leq r$, where $r$ is the number of removable boxes of $\lambda$. Then $S_i^\lambda$ is an $S_{n-1}$-submodule of $S^\lambda$.\end{Lemma}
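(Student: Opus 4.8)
The plan is to check invariance on the generators $s_1,\dots,s_{n-2}$ of $S_{n-1}$ and to reduce everything to a single statement about the position of the entry $n$. First I would fix $T$ with $\idx(T)\le i$ and an index $1\le j\le n-2$, and apply the action (\ref{eq:kl-action}). If $j\in D(T)$ then $s_j\cdot C_T=-C_T\in S_i^\lambda$ and there is nothing to do; if $j\notin D(T)$ then $s_j\cdot C_T=C_T+\sum_{R:\,j\in D(R)}\muu(T,R)\,C_R$, so the lemma reduces to the combinatorial claim $(\star)$: whenever $\muu(T,R)\ne0$, $j\notin D(T)$, $j\in D(R)$ and $j\le n-2$, we have $\idx(R)\le\idx(T)$.

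To attack $(\star)$ I would translate into permutations using $Q=CSS(\lambda)$. Writing $v\rsk(T,Q)$ and $w\rsk(R,Q)$, the two permutations lie in the common left cell $\Cell_Q$, and by the two facts on descents and $\muu$ recalled above we have $\muu(T,R)=\muu(v,w)$ and $j\in D(T)\iff j\in D(v)$ (similarly for $R$ and $w$). The advantage of $CSS(\lambda)$ is the explicit column-reading of preimages: since $n$ occupies the bottom of its column in $T$ it is read first there, whence $v^{-1}(n)=1+\#\{\text{boxes strictly left of the column of }n\}$. As the removable boxes are labelled top-to-bottom, that is from the rightmost to the leftmost column, a larger index forces $n$ into a column further left, hence a strictly smaller $v^{-1}(n)$. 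Thus $(\star)$ is equivalent to the inequality $w^{-1}(n)\ge v^{-1}(n)$.

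The crux, and the step I expect to be the main obstacle, is a dichotomy for the pair $(v,w)$: if $\muu(v,w)\ne0$ and $j\in D(w)\setminus D(v)$, then either $w<v$ in Bruhat order or $w=s_jv$. This rests on the standard interaction of Kazhdan--Lusztig polynomials with left descents, namely $P_{v,w}=P_{s_jv,w}$ when $j\in D(w)$: comparing the degree of $P_{s_jv,w}$ against the bound defining $\muu$ forces $\muu(v,w)=0$ unless $s_jv$ fails to lie strictly below $w$, and the only way this can happen is $s_jv=w$. Granting the dichotomy, $(\star)$ follows at once. If $w<v$, then the elementary monotonicity $x\le y\Rightarrow x^{-1}(n)\ge y^{-1}(n)$ (immediate from the sorted-prefix criterion for Bruhat order, since $n\in\{x(1),\dots,x(i)\}$ implies $n\in\{y(1),\dots,y(i)\}$) gives $w^{-1}(n)\ge v^{-1}(n)$. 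If instead $w=s_jv$ with $j\le n-2$, then $w$ is obtained from $v$ by interchanging the values $j$ and $j+1$, both smaller than $n$, so the position of $n$ is unchanged and $w^{-1}(n)=v^{-1}(n)$. Either way $w^{-1}(n)\ge v^{-1}(n)$, which proves $(\star)$. It is exactly here that the hypothesis $j\le n-2$ is used: for $j=n-1$ the exceptional transposition would move $n$, which is precisely why $S_n$ itself does not preserve the filtration.
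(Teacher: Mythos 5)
Your proposal is correct and follows essentially the same route as the paper: pass to permutations via the recording tableau $CSS(\lambda)$, observe that the index of the $P$-tableau is read off from the position of $n$ in one-line notation, and then split on Bruhat comparability using the interaction of $\muu$ with descents (your dichotomy via $P_{v,w}=P_{s_jv,w}$ is the content of the paper's cases (a), (c), (d), and your exceptional case $w=s_jv$ with $j\le n-2$ is handled identically). The only cosmetic difference is in the case $w<v$, where you invoke the monotonicity of $x\mapsto x^{-1}(n)$ along the Bruhat order via the sorted-prefix criterion, while the paper derives a contradiction from a length-increasing chain of transpositions that must at some point move $n$ rightward -- these are the same fact.
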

\begin{proof}If $i=r$, there is nothing to prove. Otherwise, choose arbitrary tableau $T,R$ in $SYT(\lambda)$ with $C_T\in S_i^\lambda$ and $C_R\not\in S_i^\lambda$, or equivalently, $\idx(T)\leq i<\idx(R)$. Instead of working in $S^\lambda$, we work in the module $S^{\Cell_{CSS(\lambda)}}$. To this end, define the unique permutations $v,w\in S_n$ with images $(T,CSS(\lambda))$ and $(R,CSS(\lambda))$ under RSK. We are required to show that $C_w$ does not appear in $s_j\cdot C_v$ (see (\ref{eq:kl-action}) for the action) when $1\leq j\leq n-2$. We consider cases depending on the Bruhat comparibility of $v$ and $w$.

(a) $v$ and $w$ are not Bruhat compatible. Then $\muu(v,w)=0$ by definition.

(b) $w<v$. We have a chain $w=w_1\to\cdots\to w_k=v$ of swaps with $l(w_m)<l(w_{m+1})$. At some point, the swap $w_m\to w_{m+1}$ moves $n$ to the right. But this will decrease the length, which is a contradiction.

(c) $v<w$ and $l(v,w)>1$. Suppose $C_w$ appears in the expansion of $s_j\cdot C_v$. Then $j\in D(w)$ and $j\not\in D(v)$, so $\muu(v,w)=0$ by \cite[Proposition 5.1.9]{bjorner}.\footnote{Björner \cite{bjorner} uses the convention of right descent sets, but the result for left descent sets is analogous.}

(d) $v<w$ and $l(v,w)=1$. Again, suppose $C_w$ appears in the expansion of $s_j\cdot C_v$ for some $s_j\in S_{n-1}$. Then $j\in D(w)$, $j\not\in D(v)$ and $l(v,w)=1$, which implies $w=s_j v$. But $n$ occurs at different positions in the one-line notation of $v$ and $w$, so we must have $j=n-1$. This is again a contradiction, since $s_{n-1}\not\in S_{n-1}$.
\end{proof}

Recall that $\mu_i$ is the partition obtained by deleting the $i^{\text{th}}$ removable box from $\lambda\vdash n$. There is a bijection $d_i:\{T\in SYT(\lambda)\mid \idx(T)=i\}\to SYT(\mu_i)$ which removes the largest box of each tableau $T$. By taking its linear extension, we have an isomorphism of vector spaces $d_i:S_i^\lambda/S_{i-1}^\lambda\to S^{\mu_i}$ for each $1\leq i\leq n-1$. Our goal is to show that this is in fact an isomorphism of $S_{n-1}$-modules. For this, we need to show that the deletion map preserves the function $\muu$.

We again define a specific recording tableau and work with the permutations explicitly. For a partition $\lambda\vdash n$ with $r$ removable boxes and $1\leq i\leq r$, define the tableau $CSS_i(\lambda)$ by the following procedure:

\begin{itemize}
\item Create a tableau of shape $\lambda$, and place the value $n$ in removable box $i$.
\item Suppose $n$ is placed in the $k^\text{th}$ row.  For $1\leq m\leq k-1$, place the value $n-m$ at the end of row $k-m$.
\item Place the remaining values $1,\dots,n-k$ into the tableau column by column.
\end{itemize}

For example, if $\lambda=(4,3,1)$ and $i=2$ we have:
$$CSS_i(\lambda)=\young(1467,258,3)$$
The proof that the $\muu$ function is preserved under the deletion map was originally given by Rhoades in the case of rectangular partitions. The following Lemma appears implicitly in the proof:

\begin{Lemma}[\cite{rhoades}, Lemma 3.1]Suppose $u=u_1\cdots u_{n-1}$ and $v=v_1\cdots v_{n-1}$ are elements of $S_{n-1}$, written in their one-line notation. For a fixed $0\leq k\leq n-1$, define the permutations
\begin{align*}u'&=u_1\cdots u_knu_{k+1}\cdots u_{n-1},\\ 
v'&=v_1\cdots v_knv_{k+1}\cdots v_{n-1}\end{align*}
in $S_n$. Then we have an equality of $\muu$ values $\muu(u,v)=\muu(u',v')$.\end{Lemma}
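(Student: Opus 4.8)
My plan is to realise the insertion of the value $n$ as iterated right multiplication by simple reflections, and to push the $\muu$-value through the Kazhdan--Lusztig recursion one position at a time. For $w\in S_{n-1}$ and $k+1\le p\le n$, write $\sigma_p(w)\in S_n$ for the permutation obtained by inserting $n$ into position $p$ of the one-line notation of $w$; thus $u'=\sigma_{k+1}(u)$, $v'=\sigma_{k+1}(v)$, and $\sigma_n(w)$ is the image of $w$ under the parabolic inclusion $S_{n-1}\hookrightarrow S_n$. Counting inversions gives $l(\sigma_p(w))=l(w)+(n-p)$, so $l(v')-l(u')=l(v)-l(u)$ and the degree in which $\muu$ is extracted never changes. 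Moving $n$ one box to the right is right multiplication by a simple reflection, $\sigma_{p+1}(w)=\sigma_p(w)\,s_p$, with the length dropping by one; hence $s_p$ is a common right descent of $\sigma_p(u)$ and $\sigma_p(v)$.

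Since $\muu$ is symmetric and vanishes on Bruhat-incomparable pairs, I would first reduce to the case $u<v$, the incomparable case being handled by the fact that each $\sigma_p$ is an embedding of Bruhat orders so that both sides vanish. In the comparable case the engine is the (right-handed) Kazhdan--Lusztig recursion at the common descent $s=s_p$,
\[
P_{x,w}=P_{xs,ws}+q\,P_{x,ws}-\sum_{\substack{z<ws\\ zs<z}}\muu(z,ws)\,q^{(l(w)-l(z))/2}P_{x,z},
\]
applied to $x=\sigma_p(u)$ and $w=\sigma_p(v)$, where $xs=\sigma_{p+1}(u)$ and $ws=\sigma_{p+1}(v)$. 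Everything hinges on showing the last two (correction) terms vanish.

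To this end I would introduce, for each $p$, the set $B_p=\{z\in S_n:z^{-1}(n)\ge p\}$ of permutations whose largest entry sits weakly to the right of position $p$, and check that it is a \emph{lower order ideal} for the Bruhat order: if $z\le y$ and $n$ occupies a position $\ge p$ in $y$, then $\max\{z(1),\dots,z(p-1)\}\le\max\{y(1),\dots,y(p-1)\}\le n-1$ by the componentwise Bruhat criterion, so $n$ sits in position $\ge p$ in $z$ as well. Granting this, the middle term dies because $\sigma_{p+1}(v)\in B_{p+1}$ while $\sigma_p(u)\notin B_{p+1}$, so $\sigma_p(u)\not\le\sigma_{p+1}(v)$ and $P_{x,ws}=0$; and each summand dies because the descent condition $zs_p<z$ forces $n$ into position $\ge p+2$, i.e. $z\in B_{p+2}$, whereas $\sigma_p(u)\notin B_{p+2}$, so $\sigma_p(u)\not\le z$ and $P_{x,z}=0$ identically. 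This yields $P_{\sigma_p(u),\sigma_p(v)}=P_{\sigma_{p+1}(u),\sigma_{p+1}(v)}$, and in particular equality of the $\muu$-coefficients.

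Iterating for $p=k+1,\dots,n-1$ transports $\muu(u',v')$ to $\muu(\sigma_n(u),\sigma_n(v))$, and since $\sigma_n$ is the standard parabolic inclusion $S_{n-1}=\langle s_1,\dots,s_{n-2}\rangle\hookrightarrow S_n$, invariance of Kazhdan--Lusztig polynomials under parabolic embeddings identifies this with $\muu(u,v)$ computed in $S_{n-1}$. I expect the one genuinely delicate point to be the vanishing of the $\muu$-sum: a priori its summands can reach the top degree and perturb $\muu$, and the reason they cannot is the precise matching of the right descent $s_p$ with the ideal $B_{p+2}$, which forces $x\not\le z$ and kills each $P_{x,z}$ as a polynomial. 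The inversion count, the reduction to $u<v$, and the final parabolic step are all routine by comparison.
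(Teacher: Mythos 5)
The paper does not actually prove this statement: it is quoted from Rhoades and used as a black box, so there is no internal proof to compare against. Judged on its own, your argument is correct and complete. The inversion count $l(\sigma_p(w))=l(w)+(n-p)$, the identity $\sigma_{p+1}(w)=\sigma_p(w)s_p$ making $s_p$ a common right descent, the observation that $B_p$ is a lower Bruhat ideal via the tableau criterion, the resulting vanishing of both correction terms in the recursion (so that $P_{\sigma_p(u),\sigma_p(v)}=P_{\sigma_{p+1}(u),\sigma_{p+1}(v)}$), and the final appeal to parabolic invariance of Kazhdan--Lusztig polynomials all check out; since $l(v')-l(u')=l(v)-l(u)$, equality of the polynomials does give equality of the $\muu$-coefficients. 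This is in substance the same inductive ``slide $n$ to the right'' argument as Rhoades' original proof. Two small points you should tighten. First, as literally written, ``the descent condition $zs_p<z$ forces $n$ into position $\geq p+2$'' is false on its own (take $z$ with $n$ in position $1$): you also need $z\leq ws=\sigma_{p+1}(v)\in B_{p+1}$, which places $n$ weakly right of position $p+1$ before the descent at $p$ rules out position $p+1$; this hypothesis is available since the sum ranges over $z<ws$, but it should be invoked explicitly. Second, the assertion that each $\sigma_p$ is a Bruhat order embedding (used to dispose of the incomparable case) deserves a one-line justification: it follows by iterating the lifting property, since $s_p$ is a right descent of both $\sigma_p(u)$ and $\sigma_p(v)$.
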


\begin{Proposition}\label{prop::d_mu}Fix $\lambda\vdash n$, and choose tableaux $T,R\in SYT(\lambda)$, both with index $i$. Then
\[\muu(T,R)=\muu(d_i(T),d_i(R))\]
\end{Proposition}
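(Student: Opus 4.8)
The plan is to evaluate both sides of the identity through the RSK correspondence, choosing recording tableaux so that the two $\muu$-values become $\muu$-values of permutations related by Rhoades' Lemma 3.1. Since $\muu(T,R)=\muu((T,Q),(R,Q))$ for every recording tableau $Q$ of shape $\lambda$, I would work with $Q=CSS_i(\lambda)$: let $v,w\in S_n$ satisfy $v\rsk(T,CSS_i(\lambda))$ and $w\rsk(R,CSS_i(\lambda))$, so that $\muu(T,R)=\muu(v,w)$. Write $k$ for the row containing removable box $i$. The crucial first step is to show that the letter $n$ sits in position $n-k+1$ in the one-line notation of both $v$ and $w$, independently of the tableau.

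To see this I would run the inverse RSK algorithm prescribed by $CSS_i(\lambda)$. By construction the $k$ largest labels $n,n-1,\dots,n-k+1$ of $CSS_i(\lambda)$ occupy the cells at the ends of rows $k,k-1,\dots,1$, so the first $k$ reverse-bumping steps eject exactly the letters $v_n,\dots,v_{n-k+1}$. Since $n$ is the global maximum and begins in removable box $i$ (the end of row $k$), each reverse bump carries $n$ one row upward and, being larger than every other entry, deposits it at the end of the row above; after $k-1$ steps $n$ has reached the end of row $1$ and is ejected at the step labelled $n-k+1$. Hence $v_{n-k+1}=n$, and the same holds for $w$.

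Deleting this occurrence of $n$ produces $v_0',w_0'\in S_{n-1}$, and by the previous paragraph $v,w$ are recovered from $v_0',w_0'$ by inserting $n$ in the common slot $n-k+1$; Rhoades' Lemma 3.1 then yields $\muu(v_0',w_0')=\muu(v,w)$. It remains to read off $v_0'$ and $w_0'$ under RSK. For the insertion tableaux I would invoke the standard fact that removing the maximal letter of a permutation removes the box containing $n$ from its insertion tableau, whence $P(v_0')=d_i(T)$ and $P(w_0')=d_i(R)$. For the recording tableaux I would pass to inverses: $Q(v_0')=P((v_0')^{-1})$, and $(v_0')^{-1}$ is the standardisation of $v^{-1}$ with its last letter deleted, whose insertion tableau is obtained by reverse-bumping $Q(v)=CSS_i(\lambda)$ out of the cell of $n$ in $P(v)=T$, i.e. out of removable box $i$. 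This output depends only on $CSS_i(\lambda)$ and $i$, so $Q(v_0')=Q(w_0')=:Q''$. Therefore $\muu(v_0',w_0')=\muu((d_i(T),Q''),(d_i(R),Q''))=\muu(d_i(T),d_i(R))$, and concatenating the equalities gives $\muu(T,R)=\muu(d_i(T),d_i(R))$.

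The main obstacle is the reverse-bumping bookkeeping behind the first step: one must check that at each of the first $k$ reverse steps the cell prescribed by $CSS_i(\lambda)$ really does hold $n$, so that $n$ climbs through the end cells of rows $k,k-1,\dots,1$ and is ejected last. A secondary but easily-overlooked point is that the common recording tableau $Q''$ is in general \emph{not} $CSS(\mu_i)$ --- this already fails when the removable box lies in the third row --- so the argument must lean on the fact that $\muu$ is insensitive to the choice of a common recording tableau rather than attempting to match $CSS(\mu_i)$ exactly.
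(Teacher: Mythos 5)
Your argument is correct and follows essentially the same route as the paper's: both reduce the statement to Rhoades' Lemma~3.1 by working with the recording tableau $CSS_i(\lambda)$ and showing that the letter $n$ occupies position $n-k+1$ in the one-line notation of the corresponding permutations. The only real difference is one of direction --- you establish this by reverse bumping from $(T,CSS_i(\lambda))$ and identify the common recording tableau of the truncated permutations only abstractly as some fixed $Q''$, whereas the paper runs the insertion forwards from $\tilde{u}\rsk(d_i(T),d_i(CSS_i(\lambda)))$ and thereby names that tableau explicitly as $d_i(CSS_i(\lambda))$.
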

\begin{proof}Suppose the removable box $i$ of $\lambda$ occurs in row $k$. Define the following permutations by their images under the RSK correspondence, using the tableau $CSS_i(\lambda)$.
\[\begin{matrix}u\rsk(T,CSS_i(\lambda)) & \quad & \tilde{u}\rsk(d_i(T),d_i(CSS_i(\lambda))) \\
v\rsk(R,CSS_i(\lambda)) & \quad & \tilde{v}\rsk(d_i(R),d_i(CSS_i(\lambda)))\end{matrix}\]
By the above Lemma, it remains to show that $u=\tilde{u}_1\cdots \tilde{u}_{n-k}n\tilde{u}_{n-k+1}\cdots \tilde{u}_{n-1}$ and the result for $v$ follows \emph{mutatis mutandis}.

The box containing the value $n-k+1$ in $CSS_i(\lambda)$ will be in the top row. Hence, $\tilde{u}_{n-k+1}$ is inserted directly into the top row when performing RSK on $\tilde{u}$. In $u$, we instead insert $n$, which will also insert into the top row. Inserting $\tilde{u}_{n-k+1}$ will then push $n$ into the next row. Each following insertion for $u$ mimics $\tilde{u}$, except the $n$ box is constantly pushed down the right-hand side of the insertion tableau. One can verify that this gives the correct tableaux-pair for $u$, with an additional $n$-box in row $k$ for each.
\end{proof}

For a tableau $T\in SYT(\lambda)$ with index $i$, write $[C_T]$ for the basis element $C_T+S_{i-1}^\lambda$ of $S_i^\lambda/S_{i-1}^\lambda$, where $1\leq i\leq n-1$ is the unique value such that $C_T \in S_i^\lambda$, but $C_T \notin S_{i-1}^\lambda$.

By (\ref{eq:kl-action}), for a tableau $T$ with index $i$ we have:
$$s_j\cdot [C_T]=\begin{cases}
-[C_T] & j\in D(T) \\
[C_T]+\sum_{R}\muu(T,R)[C_R]&j\not\in D(T)\end{cases}
$$
In the second case, we sum over $R\in SYT(\lambda)$ with $j\in D(R)$ and $\idx(R)=\idx(T)$. Then:
$$d_is_j\cdot [C_T]=\begin{cases}
-C_{d_i(T)} & j\in D(T) \\
C_{d_i(T)} + \sum_R\muu(T,R)C_{d_i(R)} & j\not\in D(T)
\end{cases}$$
with the sum in the second case as before. If $1\leq j\leq n-2$, then $j\in D(T)$ if and only if $j\in D(d_i(T))$, and likewise for $R$. Hence:
$$d_is_j\cdot [C_T]=\begin{cases}
-C_{d_i(T)} & j\in D(d_i(T)) \\
C_{d_i(T)} + \sum_R\muu(d_i(T),d_i(R))C_{d_i(R)} & j\not\in D(d_i(T))
\end{cases}$$
This is the action of $s_j$ on $C_{d_i(T)}$ in $S^{\mu_i}$. Thus, $s_j$ and $d_i$ commute on every basis element $[C_T]$ of $S_i^\lambda/S_{i-1}^\lambda$ for every generator $s_j$ of $S_{n-1}$. This completes the proof of Theorem \ref{thm:branching}. 

\section{The long cycle action on the KL basis}
In this section we prove Theorem \ref{thm:main1}. Let $w_n\in S_n$ be the long element, and $w_{n-1}$ be the image of the long element of $S_{n-1}$ under the standard embedding into $S_n$. Our goal is to relate the identity $c=w_nw_{n-1}$ to $\pr=\ev_n\ev_{n-1}$ from Lemma \ref{Cor::pr}.

Note that $w_{n-1}\in S_{n-1}$ will act on a tableau of size $n-1$ by $\ev_{n-1}$ (up to sign) \cite{stembridge}. Moreover, $d_i$ commutes with $w_{n-1}$ on $S_i^\lambda/S_{i-1}^\lambda$. Therefore
\begin{align*}w_{n-1}\cdot [C_T]&=d_i^{-1}w_{n-1}d_i\cdot [C_T]\\
&=d_i^{-1}w_{n-1} C_{d_i(T)}\\
&=\pm d_i^{-1} C_{\ev_{n-1}d_i(T)}=\pm [C_{\ev_{n-1}(T)}].\end{align*}
Note that the sign depends only on the shape of $d_i(T)$. Since the index of $T$ matches the index of $\ev_{n-1}(T)$, we obtain:

\begin{Lemma}Fix a partition $\lambda\vdash n$. The action of $w_{n-1}$ on the KL basis $\{C_T\}$ of $S^\lambda$ is
$$w_{n-1}\cdot C_T=\pm C_{\ev_{n-1}(T)}+\sum_R a_RC_R,$$
with the sum over $R\in SYT(\lambda)$ satisfying $\idx(R)<\idx(T)$, and $a_R\in\bZ$ are unknown coefficients. Furthermore, the sign of $C_{\ev_{n-1}(T)}$ depends only on $\idx(T)$.\end{Lemma}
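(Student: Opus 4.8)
The plan is to deduce this lemma from Theorem~\ref{thm:branching} together with the classical evacuation theorem of Berenstein--Zelevinsky and Stembridge \cite{stembridge}, by reducing the action of $w_{n-1}$ on $S^\lambda$ to its action on the subquotients of the index filtration. First I would fix $T\in SYT(\lambda)$ with $\idx(T)=i$. Since $w_{n-1}\in S_{n-1}$ and $S_i^\lambda$ is an $S_{n-1}$-submodule by Theorem~\ref{thm:branching}, the vector $w_{n-1}\cdot C_T$ already lies in $S_i^\lambda$, so
$$w_{n-1}\cdot C_T=\sum_{R:\,\idx(R)\leq i}a_R\,C_R.$$
The whole content of the lemma is then the determination of the index-$i$ part of this sum.

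Next I would pass to the quotient $S_i^\lambda/S_{i-1}^\lambda$ and transport the computation to $S^{\mu_i}$ along the $S_{n-1}$-equivariant isomorphism $d_i$ supplied by Theorem~\ref{thm:branching}, under which $[C_T]\mapsto C_{d_i(T)}$ in the KL basis of $S^{\mu_i}$. Here $S^{\mu_i}$ is a Specht module for $S_{n-1}$ of shape $\mu_i\vdash n-1$, so the evacuation theorem \cite{stembridge} applies: the long element $w_{n-1}$ acts on the KL basis of $S^{\mu_i}$ by evacuation up to a sign, giving $w_{n-1}\cdot C_{d_i(T)}=\eps_i\,C_{\ev_{n-1}(d_i(T))}$, where $\ev_{n-1}$ now denotes the full evacuation of the $(n-1)$-box shape $\mu_i$ and $\eps_i\in\{\pm1\}$ is a sign depending only on $\mu_i$.

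I would then reconcile this with partial evacuation on $SYT(\lambda)$. Directly from the definition of the partial map $\ev_{n-1}$ on $SYT(\lambda)$, which fixes the box containing $n$ and evacuates the size-$(n-1)$ complement, one reads off $d_i(\ev_{n-1}(T))=\ev_{n-1}(d_i(T))$, and moreover $\ev_{n-1}(T)$ retains index $i$ because the $n$-box is never moved. Pulling the previous displayed identity back through $d_i^{-1}$ therefore yields $[w_{n-1}\cdot C_T]=\eps_i\,[C_{\ev_{n-1}(T)}]$ in $S_i^\lambda/S_{i-1}^\lambda$, and since $\mu_i$ is determined by $i=\idx(T)$, the sign $\eps_i$ depends only on $\idx(T)$. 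Lifting back to $S^\lambda$, the basis $\{[C_R]:\idx(R)=i\}$ of the quotient shows that among the index-$i$ tableaux the only surviving coefficient is $\eps_i$ on $R=\ev_{n-1}(T)$; all remaining terms $\sum_R a_R C_R$ then lie in $S_{i-1}^\lambda$, i.e.\ satisfy $\idx(R)\leq i-1<\idx(T)$, which is exactly the asserted form.

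The main obstacle is the precise input from the middle step: one must be confident that the evacuation sign in the Berenstein--Zelevinsky/Stembridge theorem is a \emph{single} constant attached to each shape rather than varying from tableau to tableau, since this is precisely what forces the sign of $C_{\ev_{n-1}(T)}$ to depend only on $\idx(T)$ and not on $T$ itself. The intertwining $d_i\circ\ev_{n-1}=\ev_{n-1}\circ d_i$ in the third step is essentially formal once the fixing/unfixing procedure defining partial evacuation is unwound, but it should be verified carefully so that the identification of the leading term as $\pm C_{\ev_{n-1}(T)}$ is genuinely correct.
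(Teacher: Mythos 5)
Your proposal is correct and follows essentially the same route as the paper: both arguments use the $S_{n-1}$-invariance of the index filtration from Theorem~\ref{thm:branching}, transport the computation to $S^{\mu_i}$ along the equivariant deletion map $d_i$, apply the Berenstein--Zelevinsky/Stembridge evacuation theorem there (whose sign is indeed a single constant attached to the shape $\mu_i$), and use the compatibility $d_i\circ\ev_{n-1}=\ev_{n-1}\circ d_i$ together with the fact that $\ev_{n-1}$ preserves the index. The only difference is that you spell out more carefully the steps the paper compresses into one displayed chain of equalities.
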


Theorem \ref{thm:main1} follows immediately from:

\begin{Theorem}\label{thm:mainthmredux}
Fix a partition $\lambda\vdash n$. The action of $c$ on the KL basis $\{C_T\}$ of $S^\lambda$ is
$$c\cdot C_T=\pm C_{\pr(T)}+\sum_R b_RC_{\pr(R)},$$
with the sum over $R\in SYT(\lambda)$ satisfying $\idx(R)<\idx(T)$, and $b_R\in\bZ$ are unknown coefficients. Furthermore, the sign of $C_{\pr(T)}$ depends only on $\idx(T)$.\end{Theorem}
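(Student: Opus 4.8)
The plan is to exploit the factorisation $c=w_nw_{n-1}$ noted above, which mirrors the factorisation $\pr=\ev_n\ev_{n-1}$ of Lemma \ref{Cor::pr}. Since the action of $w_{n-1}$ is already controlled by the preceding Lemma, and the action of $w_n$ (the long element of $S_n$) is controlled by the Berenstein--Zelevinsky/Stembridge theorem \cite{stembridge,berenstein}, the theorem should follow by composing the two actions and keeping careful track of indices and signs. Recall that $c\cdot C_T = w_n\cdot(w_{n-1}\cdot C_T)$.

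First I would record the two ingredients. By the preceding Lemma,
$$w_{n-1}\cdot C_T = \pm\,C_{\ev_{n-1}(T)} + \sum_{R:\,\idx(R)<\idx(T)} a_R\,C_R,$$
where the leading sign depends only on $\idx(T)$. By \cite{stembridge,berenstein}, the long element acts on the KL basis by evacuation up to a single global sign $\epsilon=\pm1$ depending only on $\lambda$: that is, $w_n\cdot C_S = \epsilon\,C_{\ev_n(S)}$ for every $S\in SYT(\lambda)$.

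Next I would apply $w_n$ to the displayed expansion of $w_{n-1}\cdot C_T$. The leading term becomes $\pm\epsilon\,C_{\ev_n\ev_{n-1}(T)} = \pm\epsilon\,C_{\pr(T)}$ by Lemma \ref{Cor::pr}; since $\epsilon$ depends only on $\lambda$ and the remaining $\pm$ only on $\idx(T)$, the leading sign of $C_{\pr(T)}$ depends only on $\idx(T)$, as required. Each lower term becomes $a_R\epsilon\,C_{\ev_n(R)}$ with $\idx(R)<\idx(T)$. The crucial observation is that $\ev_n = \pr\circ\ev_{n-1}$, because $\pr\,\ev_{n-1} = \ev_n\ev_{n-1}\ev_{n-1} = \ev_n$ using that $\ev_{n-1}$ is an involution; hence $C_{\ev_n(R)} = C_{\pr(R')}$ with $R' = \ev_{n-1}(R)$. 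Since $\ev_{n-1}$ fixes the box containing $n$, it preserves the index, so $\idx(R') = \idx(R) < \idx(T)$, and as $\ev_{n-1}$ is a bijection of each index class I may reindex the sum by $R'$. Setting $b_{R'} = \epsilon\,a_{\ev_{n-1}(R')}\in\bZ$ yields exactly the claimed form $c\cdot C_T = \pm C_{\pr(T)} + \sum_{R':\,\idx(R')<\idx(T)} b_{R'}C_{\pr(R')}$.

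The computation itself is short; the genuinely delicate point is the sign bookkeeping. I would emphasise that the argument relies on the Berenstein--Zelevinsky/Stembridge sign being \emph{uniform} across the basis: were it to depend on the individual tableau, the assertion that the sign of $C_{\pr(T)}$ depends only on $\idx(T)$ could fail. The second point requiring care is that the error terms coming from $w_{n-1}$ must be rewritten as $C_{\pr(R')}$ rather than left as $C_{\ev_n(R)}$; this rewriting, which is exactly what makes the statement compatible with the QR/promotion reformulation of Theorem \ref{thm:main1}, hinges precisely on the identity $\ev_n = \pr\,\ev_{n-1}$ together with the index-preservation of $\ev_{n-1}$.
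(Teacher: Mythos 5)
Your proposal is correct and follows essentially the same route as the paper: both use the factorisation $c=w_nw_{n-1}$, the preceding Lemma for $w_{n-1}$, the Berenstein--Zelevinsky/Stembridge action of $w_n$ by $\pm\ev_n$, and a reindexing of the error terms by $\ev_{n-1}$ (you reindex after applying $w_n$ via the equivalent identity $\ev_n=\pr\,\ev_{n-1}$, the paper reindexes before). Your explicit justification of the index-preservation of $\ev_{n-1}$ and of the uniformity of the evacuation sign merely makes precise what the paper leaves implicit.
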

\begin{proof}Reindexing the sum from the Lemma with $R\mapsto{\ev_{n-1}(R)}$,
$$w_{n-1}\cdot C_T=\pm C_{\ev_{n-1}(T)}+\sum_R a_{\ev_{n-1}(R)}C_{\ev_{n-1}(R)}.$$
We set $b_R:=a_{\ev_{n-1}(R)}$ and apply the long element to both sides:
$$w_nw_{n-1}\cdot C_T=\pm C_{\ev_n\ev_{n-1}}+\sum_R \pm b_RC_{\ev_n\ev_{n-1}(R)}.$$
Replacing $w_nw_{n-1}$ with $c$ and $\ev_n\ev_{n-1}$ with $\pr$ gives the result required.
\end{proof}

\begin{eg}Fix $\lambda=(3,1,1)$. We arrange $SYT(\lambda)$ using the total index ordering:
\begin{align*}
\young(145,2,3) \prec\; \young(135,2,4) \prec\; \young(125,3,4) \prec\; \young(134,2,5) \prec\; \young(124,3,5) \prec\; \young(123,4,5)
\end{align*}
Using \texttt{MAGMA} \cite{magma} we compute $[c]$ with respect to this ordered basis and calculate its QR decomposition:
\[[c]=\text{\footnotesize$\begin{pmatrix}
0 & 0 & 0 & 1 & 0 & 0 \\
0 & 0 & 0 & 0 & 1 & 0 \\
1 & 0 & 0 & -1 & 1 & 0 \\
0 & 0 & 0 & 0 & 0 & 1 \\
0 & 1 & 0 & -1 & 0 & 1 \\
0 & 0 & 1 & 0 & -1 & 1\end{pmatrix}$}=\text{\footnotesize$\begin{pmatrix}
0 & 0 & 0 & 1 & 0 & 0 \\
0 & 0 & 0 & 0 & 1 & 0 \\
1 & 0 & 0 & 0 & 0 & 0 \\
0 & 0 & 0 & 0 & 0 & 1 \\
0 & 1 & 0 & 0 & 0 & 0 \\
0 & 0 & 1 & 0 & 0 & 0\end{pmatrix}
\begin{pmatrix}
1 & 0 & 0 & -1 & 1 & 0 \\
0 & 1 & 0 & -1 & 0 & 1 \\
0 & 0 & 1 & 0 & -1 & 1 \\
0 & 0 & 0 & 1 & 0 & 0 \\
0 & 0 & 0 & 0 & 1 & 0 \\
0 & 0 & 0 & 0 & 0 & 1 \end{pmatrix}
$}\]
One can verify that $Q$ is precisely the permutation matrix $[\pr]$.  
\end{eg}

\begin{Corollary}Let $T\in SYT(\lambda)$ and suppose $T$ has index $1$.  Then
$c \cdot C_T=\pm C_{\pr(T)}$,
with the sign depending only on $\lambda$. In particular, when $\lambda$ is a rectangular partition, then $c$ acts by the promotion operator on the KL basis of $S^\lambda$ up to sign.\end{Corollary}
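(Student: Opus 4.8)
The plan is to obtain this as a direct specialisation of Theorem~\ref{thm:mainthmredux}, exploiting the fact that index $1$ is the minimal possible value of $\idx$. First I would apply Theorem~\ref{thm:mainthmredux} to the given tableau $T$, which expresses $c\cdot C_T$ as $\pm C_{\pr(T)}$ plus a correction term summed over those $R\in SYT(\lambda)$ with $\idx(R)<\idx(T)$. Since the removable boxes of $\lambda$ are labelled starting from $1$, every tableau has index at least $1$; with $\idx(T)=1$ there is no $R$ satisfying $\idx(R)<1$, so the correction sum is empty. This immediately yields $c\cdot C_T=\pm C_{\pr(T)}$.

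For the sign claim, I would again invoke Theorem~\ref{thm:mainthmredux}, which asserts that the sign of $C_{\pr(T)}$ depends only on $\idx(T)$. Here every relevant $T$ has $\idx(T)=1$, a value determined by $\lambda$ alone, so the sign is a single constant across all index-$1$ tableaux; this gives the stated dependence on $\lambda$ only. For the rectangular case I would note that an $a\times b$ rectangle has a unique removable box, namely its bottom-right corner, so $r=1$ and consequently every $T\in SYT(\lambda)$ has index $1$. The first part of the corollary then applies to the entire KL basis simultaneously, so $c$ acts as $\pm\pr$ everywhere, recovering Rhoades' theorem.

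I do not anticipate a genuine obstacle here, as the statement is an immediate consequence of the preceding theorem. The only points requiring care are verifying that $\idx$ is bounded below by $1$ so that the correction sum truly vanishes, and checking that pinning the index to the single value $1$ correctly upgrades the conclusion \emph{``sign depends only on $\idx(T)$''} to \emph{``sign depends only on $\lambda$''}.
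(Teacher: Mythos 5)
Your proposal is correct and follows exactly the route the paper intends: the corollary is stated as an immediate consequence of Theorem~\ref{thm:mainthmredux}, with the correction sum vanishing because no tableau has index less than $1$, the sign constant because it depends only on the (fixed) index, and the rectangular case following from the uniqueness of the removable box. Nothing is missing.
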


\section{Other permutations acting on the KL basis}\label{sect:other-actions}
In this section we discuss Theorem \ref{thm:generalmain}.  Details will appear in forthcoming work.  

Fix a partition $\lambda\vdash n$. Let $I$ denote the set of generators $\{s_1,\dots,s_{n-1}\}$ of $S_n$. For a non-empty subset $J\subseteq I$, let $S_{J}\leq S_n$ denote the associated parabolic subgroup, and $w_{J}$ its longest element. To each such subset, we can associate a filtration $0=S_0^\lambda\subset S_1^\lambda\subset\cdots\subset S_p^\lambda=S^\lambda$ of the Specht module such that each subspace $S_j^\lambda$ is $S_J$-invariant and spanned by KL basis elements,  and the quotient representation $S_j^\lambda/S_{j-1}^\lambda$ is isomorphic to a simple $S_{J}$-module.

This filtration induces a total preorder $\prec_{J}$ on $SYT(\lambda)$, with $T\prec_{J} R$ when $C_T$ appears strictly before $C_R$ in this filtration. Using partial evacuation operators, we  also associate a bijection $\varphi_{J}$ on $SYT(\lambda)$ such that the action of $w_{J}$ on the KL basis is given by
$$w_{J}\cdot C_T=\pm C_{\phi_{J}(T)}+\sum_{R\prec_{J} T}a_RC_{\phi_{J}(R)}$$
for some constants $a_R\in\bZ$, and a sign which depends only on the equivalence class of $T$ under $\prec_{J}$. 

We can extend this result in a natural way to \emph{chains} of subsets of $J$. Call a permutation \emph{descending} if it is of the form $w_{J_\bullet}=w_{J_k}\cdots w_{J_1}$ for some increasing chain $J_\bullet$ of subsets $J_1\subset\cdots\subset J_k\subseteq J$. It turns out that a permutation is descending if and only if it is separable.\footnote{This observation was communicated to the authors by Joel Gibson, who discovered this fact by enumerating the descending permutations computationally in MAGMA.} Each descending permutation has an associated bijection $\varphi_{J_\bullet}=\varphi_{J_k}\cdots\varphi_{J_1}$ and a total preorder $\prec_{J_\bullet}$ on $SYT(\lambda)$. The statement is as before, with the action of $w_{J_\bullet}$ on the KL basis of $S^\lambda$ given by
\begin{equation}\label{eq:general_action}w_{J_\bullet}\cdot C_T=\pm C_{\phi_{J_\bullet}(T)}+\sum_{R\prec_{J_\bullet} T}b_RC_{\phi_{J_\bullet}(R)}\end{equation}
for some constants $b_R\in\bZ$, and a sign which depends only on the equivalence class of $T$ under $\prec_{J_\bullet}$. Theorem \ref{thm:generalmain} follows from this formula.

When $J\subseteq I$ is a \emph{connected} subset $\{s_{a+1},\dots,s_b\}$ for $0\leq a\leq b<n$, then $S_{J}$ is isomorphic to the symmetric group $S_{b-a}$. Moreover, $w_{J}=w_{b+1}w_{b-a}w_{b+1}$, where $w_k\in S_n$ is the permutation reversing $\{1,\dots,k\}$. Likewise, $\varphi_{J}=\ev_{b+1}\ev_{b-a}\ev_{b+1}$, where $\ev_k$ is the partial Schützenberger involution from Section \ref{sect:combinatorics}. Finally, the preorder $\prec_{J_i}$ can be defined explicitly by: $R\prec_{J} T$ if and only if $\ev_{a}\ev_n(R)$ precedes $\ev_{a}\ev_n(T)$ in the total index ordering.

In the case of chains of \emph{connected} subsets of $J$, equation (\ref{eq:general_action}) can be proved combinatorially. However, in the case of general chains, we use results from categorical representation theory, which will appear in later work by the authors. This will also prove analogues of these results in other types concerning the action of an ADE Weyl group on the Lusztig's dual canonical basis in the zero weight space of an irreducible representation of the corresponding Lie algebra. 

Finally, we note that the result in (\ref{eq:general_action}) does not hold  for arbitrary permutations. For example, take the non-separable permutation $w=2413\in S_4$ and $\lambda=(3,1)$. Choose any of the $3!$ orderings of the KL basis for $S^\lambda$, and compute the $QR$ decomposition of $[w]:S^\lambda\to S^\lambda$ with respect to this basis. One can verify that $Q$ is not a generalised permutation matrix under any of these orderings.

\acknowledgements{The results in Sections 1-4 are part of the first author's University of Sydney Honours Thesis (2021), supervised by the second author.  The authors thank the anonymous referees for helpful comments, and Joel Gibson, Ed Heng, Tony Licata and Eloise Little for useful conversations that improved this work.}

%% if you use biblatex then this generates the bibliography
%% if you use some other method then remove this and do it your own way
\printbibliography

\end{document}